\theoremstyle{plain}
\newtheorem{thm}{\protect\theoremname}[section]
 \newcommand\thmsname{\protect\theoremname}
 \newcommand\nm@thmtype{theorem}
 \theoremstyle{plain}
  \theoremstyle{remark}
  \newtheorem{rem}[thm]{\protect\remarkname}
  \theoremstyle{definition}
  \newtheorem*{example*}{\protect\examplename}
  \theoremstyle{definition}
  \theoremstyle{plain}
  \theoremstyle{plain}
  \newtheorem{prop}[thm]{\protect\propositionname}
  \theoremstyle{plain}
  \theoremstyle{definition}
  \newtheorem{my@rem}[thm]{Remark}
  \renewenvironment{rem}{\begin{my@rem}}{\end{my@rem}}
\newcommand{\abs}[1]{\left| #1 \right|}
\newcommand{\ceiling}[1]{\left\lceil #1 \right\rceil}
  \providecommand{\examplename}{Example}
  \providecommand{\lemmaname}{Lemma}
  \providecommand{\propositionname}{Proposition}
  \providecommand{\remarkname}{Remark}
  \providecommand{\theoremname}{Theorem}
\providecommand{\theoremname}{Theorem}
 \providecommand{\corollaryname}{Corollary}
\def\N{{\Bbb N}}
\def\R{{\Bbb R}}
\def\Z{{\Bbb Z}}
\def\Aa{{\mathcal A}}
\def\H{{\mathcal H}}
\def\C{{\Bbb C}}
\def\d{{\rm d}}
\def\CVD{{\hfill\hfil{\lower 2pt\hbox{\vrule\vbox to 7pt
{\hrule width  5pt\varphifill\hrule}\varphirule}}}\par}
\begin{document} 

\title{A note on the zeroes of the Fredholm Series}
\author{Umberto  Zannier}
\thanks{Umberto Zannier, Scuola Normale Superiore, Piazza dei Cavalieri 7, 56126 Pisa, ITALY\\ umberto.zannier@sns.it}
\maketitle
\vspace{-0.8cm}\begin{center}With an appendix by F. Veneziano\end{center}\vspace{1cm}

\centerline{\it Dedicated to the memory of Edoardo Vesentini}

\bigskip


\bigskip

{\sc Abstract}.  { The issue had been raised whether the so-called  {\it Fredholm series} $z+z^2+\dotsb+z^{2^n}+\dotsb$ has infinitely many zeroes in the unit disk. We provide an affirmative answer, proving that in fact every complex number occurs as a value infinitely many times. }

\medskip

\section{Introduction} 


Let $D=\{z\in \C:|z|<1\}$ denote the complex unit disk and let 
\begin{equation}\label{E.F}
f(z)=z+z^2+z^4+\dotsb+ z^{2^n}+\dotsb , \qquad z\in D.
\end{equation}

This function  is proposed in several books  as a simple instance of a holomorphic function on $D$ which cannot be holomorphically extended to any connected open domain of $\C$ containing $D$ strictly  (see e.g. \cite{T}, p. 159, or \cite{WW}, p. 98).  
It is often called the {\it Fredholm series}, though apparently Fredholm had considered a different series;  the terminology was introduced  by A. J. Kempner in the belief that Fredholm studied it, and then this was followed by several authors: see  J. Shallit's paper \cite{S}, especially pp.~161--162, for an accurate description. (Fredholm is also mentioned in 
R. Remmert's book  \cite{Re}, p. 254  but concerning indeed a different series sharing the said property.)   
We shall not change here this convention,  though perhaps the function should be called {\it Kempner series}.

It satisfies the functional equation $f(z)=z+f(z^2)$. 

The function became better known also through the work of K. Mahler \cite{Mah}, who considered its values at (nonzero) algebraic points in $D$ and proved their transcendency (together with  similar results for a much wider class of functions). In fact, concerning the function itself, the transcendency of the values at points $1/n$, $n\in\N^+$,  had been already established by Kempner \cite{K}; these values are interesting and related to iterated paperfolding (see \cite{S}, pp.  162--163).
All of this gave rise to many-sided generalisations and several works, both of arithmetical and functional nature, by a number of authors. See  
also D. Masser's book \cite{M} for an account of Mahler's method in particular, and for some  references,  and see \cite{Mah2} p. 132 for Mahler's reminiscences on this topic. 

\medskip

In the book \cite{M} the issue is raised (see Ex. 3.15) {\it whether the function has infinitely many zeroes in $D$}, and it is remarked that Mahler found 16 zeroes (apparently using a pocket calculator). Actually already Mahler enquired about the location of the zeroes in the paper \cite{Mah3}, commenting that there are probably zeroes ``{\it in every neighborhood of the unit circle}''.  It is easy to locate the  (unique)  real nontrivial zero $-0.658626...$ on looking at signs, but otherwise the location of zeroes seems not obvious to detect, especially due to the fairly complicated oscillatory nature of $f(z)$ near the boundary of $D$ (which is related to exponential sums with widely growing arguments - see \cite{Mah3} and see  Remarks \ref{R.R}, \ref{R.R2} below). Of course in any disk $rD$  of fixed radius $r<1$ there are only finitely many zeroes of $f$, so Mahler's expectation literally amounts to the existence of an infinity of zeroes.

Note that several `general' methods for studying the zeroes (and the values)  of holomorphic functions (e.g. Picard Theorems, Nevanlinna Theory)  usually apply to functions with singularities, or  defined on the whole complex plane,  and do not seem to be immediately helpful  here since $f(z)$ is regular on the whole $D$ and has the unit circle as a natural boundary. (See  however W. Hayman's book \cite{Hay}, especially Chs. 5 and  6, and also W. Rudin's book \cite{R}, Chs. 15, 17, for some results about the distribution of zeroes of functions holomorphic in $D$. See Remark  \ref{R.R} below for some comments on this.)  

\medskip

It is the main purpose of this note to prove that indeed there are infinitely many zeroes of $f(z)$ in $D$, confirming the expectation of Mahler. Actually we shall prove a sharpening of this result,  namely that the function attains every complex number as a value infinitely often, and even arbitrarily near $1$.

\medskip

By the results of Mahler any of the zeroes, apart from $0$,  is a transcendental number. Masser asked if for instance one can prove sharper results, e.g. of algebraic independence, or if one can study more accurately  the distribution of zeroes. We have no answer to the former issue; the latter maybe admits some kind of answer through the present method, but we haven't carried out any analysis in this direction (see the final remarks for a few comments).

\medskip

This  note contains also an  appendix by Francesco Veneziano, who kindly followed some  requests of mine, on performing  several computations concerning this problem; in the Appendix he lists some approximate zeroes of the function and locates a zero of a related function which appears in the proof.  Also, he represented  in Figure 1 below some zeroes of a truncated series, rescaled with respect to the Poincar\'e distance in $D$, from the origin.

\bigskip

Before stating our results, we introduce some notation. We denote as usual $e(w)=\exp(2\pi iw)$ and we let $\H=\{w\in \C:\Im w>0\}$ denote  the upper-half plane, with which we shall work in place of $D$. Indeed,  for $w\in \H$ we set 

 \begin{equation}\label{E.Fw}
F(w):=f(e(w))=e(w)+e(2w)+e(4w)+\ldots , \qquad  w\in \H.
\end{equation}

This function is holomorphic in $\H$ and satisfies the functional equations
\begin{equation}\label{E.fe}
F(w+1)=F(w),\qquad F(w)=e(w)+F(2w).
\end{equation}

Consider now the function
\begin{equation}\label{E.G}
G(w):=\sum_{l=1}^{\infty} e\left(-\frac{1}{2^l}\right)\left(e\left(\frac{w}{2^l}\right)-1\right),\qquad w\in\H.
\end{equation}
 Note that for $w$ in a given compact set $K\subset\C$ we have    $|e(w/2^l)-1|\ll_K 2^{-l}$, hence  the series defining $G$  converges uniformly and boundedly on compact sets of $\C$, so in fact $G$ is in particular holomorphic on the whole $\C$.
 
 We further define
\begin{equation}\label{E.S}
S(w)=F(w)+G(w),\qquad w\in \H.
\end{equation}
 
 We have
 
 \begin{prop} \label{P.P}  Let $V=S(\H)\subset \C$ be the image  of $S$ on $\H$. Then, for every open disk $U$ centered at $1$ and for every $v\in V$,  the Fredholm function $f(z)$ assumes on $D\cap U$  infinitely many times the  value $v$. 
 \end{prop}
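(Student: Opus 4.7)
The plan is to combine a functional equation for $S$ with a Rouché-type comparison of $F$ near $z=1$ to $S$ on a bounded region of $\H$.

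First, I would derive the key functional equation
\[
S\!\left(\tfrac{w+1}{2}\right) = S(w) + 1,\qquad w\in\H.
\]
From the rearrangement $F(u/2)=F(u)+e(u/2)$ of the functional equation of $F$, combined with periodicity $F(u+1)=F(u)$ and $e(1/2)=-1$, one obtains $F((w+1)/2) = F(w) - e(w/2)$. On the $G$ side, the telescoping identity $e(-1/2^l)\cdot e(1/2^{l+1}) = e(-1/2^{l+1})$, together with careful handling of the partial sums, yields $G((w+1)/2) = G(w) + e(w/2) + 1$. Adding these two identities proves the functional equation. Since both $w\mapsto (w+1)/2$ and its inverse $w\mapsto 2w-1$ preserve $\H$, it follows that $V=S(\H)$ is stable under $v\mapsto v\pm 1$, hence $V+\Z=V$.

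Second, I would establish the asymptotic
\[
F\!\left(1+\tfrac{u-1}{2^n}\right) = S(u)+n+c+O(2^{-n}),
\]
uniformly for $u$ in any bounded subset of $\H$, where $c:=\sum_{l=1}^\infty (e(-1/2^l)-1)$. This follows by iterating $F(u/2)=F(u)+e(u/2)$ and using periodicity to write
\[
F\!\left(1+\tfrac{u-1}{2^n}\right) = F(u) + \sum_{l=1}^n e\!\left(\tfrac{u-1}{2^l}\right),
\]
then decomposing $e((u-1)/2^l) - 1 = e(-1/2^l)(e(u/2^l)-1) + (e(-1/2^l)-1)$ and summing termwise; the convergent tail equals $H(u-1) = G(u)+c$.

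Finally, given $v\in V$, I would, for each large $n$, seek a preimage $u_n \in \H$ inside a fixed bounded neighborhood of a boundary point with $S(u_n)=v-n-c$, and then apply Rouché's theorem on a small disk around $u_n$ to the pair of holomorphic functions $u\mapsto F(1+(u-1)/2^n)-v$ and $u\mapsto S(u)-(v-n-c)$. By the asymptotic these differ by $O(2^{-n})$ uniformly, so for $n$ large Rouché produces a zero $u_n^\ast$ near $u_n$ and hence a solution $w_n^\ast = 1+(u_n^\ast-1)/2^n$ of $F(w_n^\ast)=v$; the corresponding $z_n^\ast = e(w_n^\ast)$ tends to $1$ at rate $2^{-n}$, so distinct $n$ give distinct points of $D\cap U$. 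The main obstacle is supplying the bounded preimages $u_n$ for infinitely many $n$; combined with $V+\Z=V$, this reduces to the Picard/Iversen-type property that $S$ (inheriting the wild boundary behavior of the natural-boundary function $F$, unperturbed by the entire function $G$) attains every complex value in every neighborhood of a boundary point of $\H$, with at most one exception.
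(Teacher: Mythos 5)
Your first two steps are correct: the functional equation $S\bigl(\tfrac{w+1}{2}\bigr)=S(w)+1$ does hold (it is equivalent to the relation $S_1(2w)=S_1(w)-1$ used in the paper, and gives $V+\Z=V$), and the expansion $F\bigl(1+\tfrac{u-1}{2^n}\bigr)=S(u)+n+c+O(2^{-n})$, with $c=c_{-1}=\sum_{l\ge1}(e(-1/2^l)-1)$, uniformly on bounded subsets of $\H$, is a correct computation. But the third step is not a proof, and the gap it leaves is exactly the heart of the matter. Because you approach $z=1$ along the single angle $\theta=0$, your approximation of $F$ carries the divergent additive constant $n+c$, so to produce a point where $f$ takes the value $v\in V$ you must exhibit, for infinitely many $n$, a solution $u_n\in\H$ of $S(u_n)=v-n-c$ lying in a \emph{fixed bounded} region (or at least growing slowly enough that the $O(2^{-n})$ error still wins in Rouch\'e). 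Nothing you have proved supplies these: $V+\Z=V$ gives $v-n\in V$, not $v-n-c\in V$; and the preimages one can manufacture from the functional equation (iterating $u\mapsto 2u-1$, which lowers the value of $S$ by $1$) satisfy $u_k-1=2^k(u_*-1)$, hence escape to infinity at exactly the rate that makes $1+(u_k-1)/2^{n}$ collapse back to the same point of $\H$ — so they yield no new zeroes. The appeal to a ``Picard/Iversen-type'' property is not available here: those theorems concern essential singularities of functions meromorphic in the plane, and there is no such general value-attainment principle for functions holomorphic in $\H$ (or $D$) with a natural boundary — e.g.\ bounded lacunary series such as $\sum 2^{-n}z^{2^n}$ have the circle as natural boundary yet omit almost all values. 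In effect your reduction assumes a local version of the paper's Theorem \ref{T.V} ($V=\C$, with preimages in controlled regions), which in the paper requires the separate study of the constants $c_m$ and is in any case stronger than what Proposition \ref{P.P} needs.

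The paper's proof avoids this obstruction by a different choice of base points: it approaches $1$ along the angles $\theta_0=1/q$ with $q=3^{2^a}$ and depth $n_0=\phi(q)$, chosen precisely so that the Ramanujan sum $\sum_{m=0}^{\phi(q)-1}e(2^m/q)$ vanishes. This kills the constant term that in your version becomes the divergent $n+c$, and yields $|F(2^{-n_0}w+\theta_0)-S(w)|\ll_K 2^{-a}$ on compacta, i.e.\ an approximation of $S$ \emph{itself}. Then Rouch\'e can be applied directly around the given point $w_0$ with $v=S(w_0)$, with no need for any information about which other values $S$ attains. If you want to salvage your route, you must either prove the missing statement about bounded preimages of $v-n-c$ (which is essentially as hard as the theorem itself), or modify the sequence of base angles as the paper does so that no divergent constant appears.
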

 
 
 The following result also shows that the set $V$ of Proposition \ref{P.P} is stable by certain translations, thus is in a sense `large'.
 
  \begin{prop} \label{P.P2}  Let $V=S(\H)\subset \C$ be as above  the image  of $S$ on $\H$. Then  $V+\Z=V$ and moreover for each $m\in\Z$ we have $V+c_m\subset V$, where $c_m$ is as in \eqref{E.cm} below.
 \end{prop}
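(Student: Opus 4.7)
The plan is to reduce the two claims for $S$ to analogous statements about an auxiliary function with a much cleaner functional equation. Introduce
\[
C(w) := \sum_{l=1}^{\infty} \bigl(e(w/2^l) - 1\bigr), \qquad S^\ast(w) := F(w) + C(w);
\]
the series defining $C$ converges absolutely on all of $\C$. A termwise rearrangement based on the identity $e(-1/2^l)\bigl(e(w/2^l) - 1\bigr) = \bigl(e((w-1)/2^l) - 1\bigr) - \bigl(e(-1/2^l) - 1\bigr)$ yields the key relation $G(w) = C(w-1) - C(-1)$. Combined with the periodicity $F(w) = F(w-1)$, this gives $S(w) = S^\ast(w-1) - C(-1)$, and hence $V = V^\ast - C(-1)$ where $V^\ast := S^\ast(\H)$; both claims for $V$ therefore reduce to the corresponding ones for $V^\ast$.

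For the first claim I would establish the clean functional equation $S^\ast(2w) = S^\ast(w) - 1$. This follows by combining $F(2w) = F(w) - e(w)$ with $C(2w) = C(w) + e(w) - 1$, the latter being immediate from the reindexing $l \mapsto l+1$ in the series for $C$. Equivalently, $S^\ast(w/2) = S^\ast(w) + 1$. Since $\H$ is invariant under both $w \mapsto 2w$ and $w \mapsto w/2$, iteration yields $V^\ast \pm n \subseteq V^\ast$ for every positive integer $n$, hence $V^\ast + \Z = V^\ast$, and translating by $-C(-1)$ gives $V + \Z = V$.

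For the second claim, equivalent to $V^\ast + c_m \subseteq V^\ast$, the natural candidate is $c_m = C(m) = \sum_{l=1}^{\infty}(e(m/2^l) - 1)$, possibly up to an additive integer depending on the precise form of \eqref{E.cm}. Given $v^\ast = S^\ast(u) \in V^\ast$, I consider the integer translates $u_N := u + m \cdot 2^N \in \H$; using $F(u + m \cdot 2^N) = F(u)$ by periodicity, a direct computation gives
\[
S^\ast(u_N) - S^\ast(u) = C(u + m \cdot 2^N) - C(u) = \sum_{l > N} e(u/2^l)\bigl(e(m/2^{l-N}) - 1\bigr),
\]
which tends to $C(m) = c_m$ as $N \to \infty$ by dominated convergence. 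Since each $u_N \in \H$, the sequence $\{S^\ast(u_N)\}$ lies in $V^\ast$ and converges to $v^\ast + c_m$. To upgrade this to exact membership $v^\ast + c_m \in V^\ast$, I would note that $(S^\ast)'(u_N) \to (S^\ast)'(u)$ by the same asymptotic analysis; whenever $(S^\ast)'(u) \neq 0$, the open mapping theorem at $u_N$ produces neighborhoods of $S^\ast(u_N)$ of uniformly bounded size inside $V^\ast$, eventually containing $v^\ast + c_m$ exactly.

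The main obstacle is handling points $u \in \H$ where $(S^\ast)'(u) = 0$, since at such critical points the image neighborhoods produced by the open mapping theorem may shrink as $N$ grows. Overcoming this will likely require either a perturbation argument (exploiting the discreteness of the critical set of $S^\ast$) or a rescaling via the iterated halving identity $S^\ast(w/2^K) = S^\ast(w) + K$ before invoking Rouch\'e; by contrast, the reduction to $S^\ast$ and the derivation of its functional equation are straightforward formal manipulations once the identity $G(w) = C(w-1) - C(-1)$ is spotted.
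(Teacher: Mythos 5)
Your first claim is handled correctly and essentially as in the paper: your $S^\ast=F+C$ is the paper's $F+H$, your identity $S(w)=S^\ast(w-1)-C(-1)$ is the paper's \eqref{E.S1H} (with $S_1(w)=S(w+1)$), and the functional equation $S^\ast(2w)=S^\ast(w)-1$ giving $V+\Z=V$ is exactly \eqref{E.S1fe}. Likewise your computation of $S^\ast(u+m2^N)-S^\ast(u)$ is the paper's \eqref{E.S1fe2}: the increment equals $c_m=C(m)$ (your guess about \eqref{E.cm} is right, with no extra integer) plus an error $\Delta_{m,N}(u)$ tending to $0$.

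The genuine gap is in the passage from ``$S^\ast(u_N)\to v^\ast+c_m$'' to ``$v^\ast+c_m$ is actually attained''. You prove this only when $(S^\ast)'(u)\neq 0$ and explicitly leave the critical case open, and neither of your proposed repairs closes it as stated: the perturbation idea (replace $v^\ast$ by nearby non-critical values $v^\ast+\eta$) only yields $v^\ast+\eta+c_m\in V^\ast$ for $\eta$ in a dense set of small values, hence $v^\ast+c_m\in\overline{V^\ast}$, which does not give membership in the open set $V^\ast$; and no rescaling by the halving identity is needed before Rouch\'e. The missing step is simply the direct Rouch\'e/Hurwitz argument, which is how the paper concludes (``exactly by the same argument as in the proof of Proposition \ref{P.P}''): since $S^\ast$ is nonconstant, the zeros of $S^\ast-v^\ast$ are isolated, so one can choose a circle $C$ about $u$ inside $\H$ on which $\delta:=\min_{w\in C}|S^\ast(w)-v^\ast|>0$; the functions $g_N(w):=S^\ast(w+m2^N)-c_m$ converge to $S^\ast$ uniformly on a compact disk containing $C$ (this needs the uniform bound $\Delta_{m,N}(w)=O_K(2^{-N})$ on compacta, slightly more than your pointwise dominated convergence, but immediate from the series), so for large $N$ Rouch\'e gives a zero of $g_N-v^\ast$ inside $C$, i.e.\ a point $w\in\H$ with $S^\ast(w+m2^N)=v^\ast+c_m$. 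Because Rouch\'e counts zeros with multiplicity, a critical point $u$ (a higher-order zero of $S^\ast-v^\ast$) causes no difficulty whatsoever, so the case distinction you worry about is unnecessary once this standard argument is invoked; without it, your proof of $V+c_m\subset V$ is incomplete.
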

 
 A study of the numbers $c_m$ carried out in \S \ref{SS.cm} will in fact imply the following sharpening of the proposition (which essentially then becomes a lemma): 
 
 \begin{thm} \label{T.V} We have $V=\C$, that is, the function $S(w)$ attains every complex value.
 \end{thm}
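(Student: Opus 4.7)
The plan is to deduce Theorem \ref{T.V} from Proposition \ref{P.P2} combined with an explicit description of the constants $c_m$ to be carried out in \S\ref{SS.cm}. First, $S$ is a non-constant holomorphic function on $\H$ (since $F$ is non-constant and $G$ is entire), so by the open mapping theorem the image $V=S(\H)$ is a non-empty open subset of $\C$. By Proposition \ref{P.P2}, $V$ is invariant under the group of translations by $\Z$ and closed under the (a priori only one-sided) translations $v\mapsto v+c_m$ for every $m\in\Z$.

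Passing to the quotient by $\Z$, I obtain a non-empty open set $\bar V\subset\C/\Z$ satisfying $\bar V+\bar c_m\subset\bar V$ for every $m$, where $\bar c_m$ denotes the class of $c_m$. Theorem \ref{T.V} becomes the assertion that $\bar V=\C/\Z$. For this it is enough to show that the sub-semigroup of $\C/\Z$ generated by the $\bar c_m$'s has $0$ as an accumulation point, and accumulates there from at least two $\R$-linearly independent directions; indeed, if such \emph{two-dimensional} accumulation holds, then given any target $z\in\C/\Z$ and any open ball $B(v_0,r)\subset\bar V$ one can select a finite non-negative integer combination $\sum_i n_i\bar c_{m_i}$ approximating $z-v_0$ to within $r$, which places $z$ inside the translated ball $B(v_0,r)+\sum_i n_i\bar c_{m_i}\subset\bar V$.

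The substance of the proof, and what I expect to be the main obstacle, therefore lies in the analysis of the $c_m$'s performed in \S\ref{SS.cm}. From the formula for $c_m$ to be produced in $\eqref{E.cm}$ — presumably a convergent series arising when one iterates the functional equation $F(w)=e(w)+F(2w)$ and matches the resulting exponentials against the summands of $G$ — one should isolate a principal term depending explicitly on $m$, of rough form $\alpha\cdot 2^{-\nu(m)}$ for a fixed non-real $\alpha\in\C$ and a suitable dyadic quantity $\nu(m)$, plus a small error. Choosing sequences of $m$'s with prescribed $2$-adic behaviour should then deliver $\bar c_m$'s approaching $0$ from a genuinely two-dimensional range of directions, yielding the density input required above and closing the theorem. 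The delicate part is the uniform estimation of the error terms against the leading term $\alpha\cdot 2^{-\nu(m)}$, since the series for $c_m$ mixes together oscillatory contributions from both $F$ and $G$ at all dyadic scales; here Veneziano's computations in the appendix, which locate a zero of a related function, should serve as a concrete check that the dominant behaviour is correctly identified.
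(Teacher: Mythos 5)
Your proposal has the right general shape (openness of $V$ plus the translation stability from Proposition \ref{P.P2}, then a density argument), but it contains two genuine gaps. First, the reduction itself is flawed: in $\C/\Z$ the imaginary part is still well defined and adds under the semigroup operation, so accumulation of the semigroup generated by the $\bar c_m$ at $0$ from merely two $\R$-linearly independent directions is \emph{not} enough — if both directions had, say, positive imaginary part, no non-negative combination could approximate a target $z-v_0$ with large negative imaginary part. One needs accumulation directions not confined to a half-plane $\Im\ge 0$ (or some substitute for the vertical direction, which is exactly what the paper extracts from the translations by $c_{\pm 1}$ together with connectedness of $V$).

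Second, and more seriously, the substance you defer to \S\ref{SS.cm} rests on a false heuristic, and the needed input is never supplied. From \eqref{E.cm} one has $c_{2m}=c_m$, so $c_m$ depends only on the odd part of $m$ and the $2$-adic valuation of $m$ is irrelevant; moreover for odd $m$ the terms $e(m/2^l)-1$ have non-positive real parts and the first two already give $\Re c_m\le -3$. Hence no nonzero class $\bar c_m$ ever approaches $0$ in $\C/\Z$, and there is no principal term of the shape $\alpha\,2^{-\nu(m)}$: the accumulation property you need cannot come from individual $c_m$'s, and no alternative mechanism is proved. The paper's actual engine is different: it passes to the real numbers $\sigma_m=c_m+c_{-m}$, shows that $\sigma(m):=\sum_{l\ge 1}(k_l+\bar k_l)^m$ (with $k_l=e(1/2^l)-1$) is an integer combination of $\sigma_1,\dots,\sigma_m$, and uses the asymptotic $(-1)^m\sigma(m)=4^m+2^m+O\left((2-\sqrt 2)^m\right)$ to exclude that all $\sigma_j$ are rational with bounded denominator; this makes the semigroup $\Gamma$ generated by $\Z$ and the $\sigma_m$ dense in $\R$, so the open set $V$ contains the whole horizontal line through each of its points, and then the translations by $c_{\pm 1}$ plus connectedness force the set of imaginary parts to be all of $\R$, whence $V=\C$. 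Without a Diophantine argument of this kind (and a corrected accumulation criterion), your proof is incomplete.
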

 
 
  In particular, combining these results we obtain the following consequence,  which contains as a corollary the main motivation for this note:
 
 \begin{thm}\label{T.T}
 For every open disk $U$ centered at $1$, the Fredholm series attains on $D\cap U$ every complex number as a value  infinitely many times.
 \end{thm}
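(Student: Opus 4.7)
The plan is to deduce Theorem~\ref{T.T} directly by combining Proposition~\ref{P.P} with Theorem~\ref{T.V}, both of which appear immediately above it in the excerpt. By Theorem~\ref{T.V} the image set $V=S(\H)$ is all of $\C$, so every complex value $v$ lies in $V$. Proposition~\ref{P.P} then asserts, precisely for such values $v\in V$, that the Fredholm function $f$ takes the value $v$ on $D\cap U$ infinitely often, for every open disk $U$ centered at $1$. Combining these two statements immediately gives the conclusion of Theorem~\ref{T.T}.

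In this sense there is essentially nothing left to prove at the level of Theorem~\ref{T.T} itself; the statement is a clean corollary rather than a theorem needing its own argument. All of the analytic work is concentrated in the earlier results: the key ingredient is the functional equation $F(w)=e(w)+F(2w)$ (equivalently $f(z)=z+f(z^2)$), which together with the entire auxiliary function $G$ produces the entire function $S$ whose values at points $w\in\H$ are certified by Proposition~\ref{P.P} to be attained by $f$ on $D\cap U$ for every neighbourhood $U$ of $1$.

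The genuine obstacle, therefore, is not the deduction but proving Theorem~\ref{T.V} itself, i.e.\ showing that the image $V$ exhausts $\C$. This is where the translation structure supplied by Proposition~\ref{P.P2} enters: knowing that $V+\Z=V$ and, more importantly, that $V+c_m\subset V$ for every $m\in\Z$, one reduces surjectivity to a statement about the subgroup of $\C$ generated by $1$ and the numbers $c_m$, together with a continuity/openness argument for $S$ on $\H$. I would expect the hardest step of the overall proof to be verifying that the constants $c_m$ accumulate densely enough (in a sense compatible with openness of $S(\H)$) to force the image to be all of $\C$; once this is established, Theorem~\ref{T.T} follows by the two-line deduction above.
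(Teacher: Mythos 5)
Your deduction is exactly the paper's: Theorem~\ref{T.T} is obtained there by the same two-line combination, ``in view of Proposition~\ref{P.P}, it suffices to prove Theorem~\ref{T.V}'', so citing those two results and concluding is precisely the intended proof. Your accompanying remarks on where the real work lies (Proposition~\ref{P.P2} and the constants $c_m$ feeding into Theorem~\ref{T.V}) also match the paper's structure, so the proposal is correct and essentially identical in approach.
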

 
So  in particular $f(z)$  has infinitely many zeroes in $D$ and even in $D\cap U$. It may be that the    methods  allow to replace $U$ with any disk centered on  the unit circle but we have not verified this.

\bigskip

{\bf Acknowledgements}.  I thank David Masser for raising several issues and for remarks and references; similarly I thank Pietro Corvaja. I thank Francesco Veneziano for some computer calculations, especially  helpful when writing the paper, and for providing the Appendix. I further thank Jeffrey  Shallit for pointing out to me a correct historical account of the terminology, and some references.


\section{Some  formulae}  
We now let $\theta\in\R$ be real and $n>0$ be a positive integer, and seek an approximation formula for $F(w2^{-n}+\theta)$.  On iterating \eqref{E.fe} we have

\begin{equation}\label{E.app}
\begin{split}
F\left({w\over 2^n}+\theta\right) & = e\left({w\over 2^n}+\theta\right)+e\left({w\over 2^{n-1}}+2\theta\right)+\dotsb +e\left({w\over 2}+2^{n-1}\theta\right)+F(w+2^n\theta)
\\
& =e\left({w\over 2^n}\right)e(\theta)+e\left({w\over 2^{n-1}}\right)e(2\theta)+\dotsb +e\left({w\over 2}\right)e(2^{n-1}\theta)+F(w+2^n\theta).
\end{split}
\end{equation}

We shall use this formula, with suitable choices for the involved quantities, to approximate $F$  with convenient  functions.

\medskip

\subsubsection{A Ramanujan sum and related ones} As remarked by Mahler in \cite{Mah3} the behaviour of $F(w)$ (for rational $\Re  w$) is linked  with certain Gaussian periods. Here we adopt a related but different viewpoint, constructing vanishing {\it Ramanujan sums}.

We let $q=3^k$, where $k\ge 2$ is an integer.  Note  the Euler function value $\phi(q)=2\cdot 3^{k-1}$.

Since $2$ is a primitive root modulo $3$ \footnote{In what follows this choice is immaterial and there is wide possibility of changing it.} and since $2^{2}\not\equiv 1\pmod{9}$, it follows easily (e.g. by induction on $k$)  that $2$ is a primitive root modulo $q$, namely the powers $2^m$, $0\le m\le \phi(q)-1$ form a complete set of residues modulo $q$ and coprime to $q$.  Since $q$ is not squarefree, it follows  (see e.g. \cite{HW}, \S 16.6, especially Thm. 271) that, for any integer $s$ coprime with $q$,  the {\it Ramanujan sum},  i.e.  of the  primitive $q$-th roots of unity, vanishes, which amounts to:
\begin{equation}\label{E.ram}
\sum_{m=0}^{\phi(q)-1}e\left({2^ms\over q}\right)=0,  \qquad \gcd(s,q)=1,
\end{equation}
the same equation actually holding when the summation for $m$  runs through a complete set of residues modulo $\phi(q)$.

\medskip

We now choose  an integer $a\ge 1$ and   $k=2^a$, a power of $2$. We put 
\begin{equation*}
\theta_0={1\over q},\qquad n_0=\phi(q)=2\cdot 3^{2^a-1}.
\end{equation*}


We also note that $q\equiv 1\pmod {2^{a+2}}$, as follows easily by induction on $a\ge 1$. Hence for $l\le a+2$ the ratio $(1-q)/2^l$ is an integer. Also, we have  for any integer $n\ge l$,
 \begin{equation*}
2^{n-l}\theta_0\equiv2^n{(1-q)\over 2^l}\theta_0\pmod \Z, \qquad 0\le l\le a.
\end{equation*} 
In particular, let $n=n_0$, so $2^{n}\equiv 1\pmod q$. Then  for $l\le a\le n_0$, on writing $2^{n_0}=1+q\ \cdot\ $integer,  this congruence yields 
 \begin{equation}\label{E.modZ}
2^{n_0-l}\theta_0\equiv {(1-q)\over 2^l q}={\theta_0\over 2^l}-{1\over 2^l}\pmod \Z, \qquad 0\le l\le a.
\end{equation}

\medskip

 \subsubsection{Preparing an  approximate formula}  Setting  $s=1$ into \eqref{E.ram} we obtain
  \begin{equation*}
 \sum_{l=0}^{n_0-1}e(2^l\theta_0)=0,
 \end{equation*}
 whence, setting   $\theta=\theta_0$, $n=n_0$ into \eqref{E.app} we derive,  taking into account  that  $2^{n_0}\theta_0\equiv\theta_0\pmod\Z$ and that $F$ is $\Z$-periodic,

 \begin{equation*}
 F\left({w\over 2^{n_0}}+\theta_0\right) =\sum_{l=0}^{n_0-1} e(2^l\theta_0)\left(e\left({w\over 2^{n_0-l}}\right)-1\right)+F(w+\theta_0).
 \end{equation*}
 
 Further, on  writing $n_0-l$ in place of $l$ in the sum and using \eqref{E.modZ}  for $1\le l\le a$, this becomes
 
  \begin{equation}\label{E.app2}
 F\left({w\over 2^{n_0}}+\theta_0\right) =\sum_{l=1}^{a} e\left({\theta_0 -1\over 2^l}\right)\left(e\left({w\over 2^{l}}\right)-1\right)+\sum_{l=a+1}^{n_0} e(2^{n_0-l}\theta_0)\left(e\left({w\over 2^{l}}\right)-1\right)+F(w+\theta_0).
 \end{equation}
 
 \medskip
 
\section{Proofs of main assertions}
 
 \begin{proof}[Proof of Proposition \ref{P.P}]
 
Let   $K\subset \H$ be a given compact subset of $\H$, to which we shall refer by a subscript to indicate dependence of the implicit constants which shall appear. For $w\in K$ we have
\begin{equation*}
|F(w+\theta_0)-F(w)|\ll_K {1\over q},\quad \left|e\left({w\over 2^{l}}\right)-1\right|\ll_K{1\over 2^l},\quad  \left| e\left({\theta_0 -1\over 2^l}\right)-e\left(-{1\over 2^l}\right)\right|\ll {1\over q}\quad (l\ge 1),
\end{equation*}
whence, by \eqref{E.app2},  again for $w\in K$, 
\begin{equation}\label{E.app3}
\left|F\left({w\over 2^{n_0}}+\theta_0\right) -S(w)\right|\ll_K\sum_{l=a+1}^\infty\left|e\left({w\over 2^{l}}\right)-1\right|+{1\over q}\ll_K{1\over 2^a}.
\end{equation}

The proof is now easily completed by a quite standard argument. Let  $v\in V$ be any  value of $S$ on $\H$, say $v=S(w_0)$ for some $w_0\in\H$.  Let $C$ be a circle inside $\H$ and centred at  $w_0$ such that $S(w)\neq v$ for $w\in C$, and choose  $K$ to be a closed disk inside $\H$,  centred at $w_0$,  and of radius larger than that of $C$.  

 The function $S(w)-v$ is holomorphic in $K$ and never zero on $C$,  hence its absolute value attains on $C$ a minimum $\delta>0$. 
 
Put now, for any integer $a\ge 1$,  $F_a(w)=F(2^{-n_0}w+\theta_0)$, where $n_0,\theta_0$ are as above, so by \eqref{E.app3}, $|F_a(w)-S(w)|\le c_K2^{-a}$ for $w\in K$, where $c_K$ is a constant, i.e. independent of $a$. Now  suppose that $a$ is so large that  $2c_K<\delta 2^a$. Then on $C$ we have 
\begin{equation*}
|(F_a(w)-v)-(S(w)-v)|<{\delta\over 2}< \min_{w\in C}|S(w)-v|.
\end{equation*}

By Rouch\'e's theorem (see e.g. \cite{T}) the functions $S(w)-v$, $F_a(w)-v$ have the same number of zeroes inside $C$, so since the former vanishes at $w_0$ (which is the centre of  $C$) the latter has at least a zero $\zeta_a$  inside $C$. 

This means that $F(2^{-n_0}\zeta_a+\theta_0)=v$. Now, the points $2^{-n_0}\zeta_a+\theta_0\in\H$, for varying $a$, 
 tend to $0$ (because $\zeta_a\in K$), so the corresponding images through $e(w)$ tend to $1$ (but are different from $1$ as they lie in $D$). 
 This proves that, if $U\subset \C$ is any neighbourhood of $1$,  $f(z)$ attains the value $v$ at infinitely many points of $D\cap U$, concluding the argument. \end{proof}

\medskip

\subsection{About the function \texorpdfstring{$S(w)$}{S(w)}}  In this subsection we develop some properties of the function $S(w)$, in particular of its image on $\H$, above denoted $V$. 

It will be equivalent and  notationally convenient to work with the function $S_1(w):=S(w+1)$. By definition we have
\begin{equation}\label{E.S1}
S_1(w)=F(w+1)+G(w+1)=F(w)+\sum_{l=1}^\infty\left(e\left({w\over 2^l}\right)-e\left(-{1\over 2^l}\right)\right).
\end{equation}

 For integers $m\in\Z$,  also define constants $c_m$ by
 \begin{equation}\label{E.cm}
 c_m:=\sum_{l=1}^\infty\left(e\left({m\over 2^l}\right)-1\right).
 \end{equation}
 Hence, setting $H(w):=\sum_{l\ge 1}\left(e\left(w/2^l\right)-1\right)$,  we may rewrite \eqref{E.S1} as 
 \begin{equation}\label{E.S1H}
S_1(w)=F(w)+\sum_{l=1}^\infty\left(e\left({w\over 2^l}\right)-1\right)+\sum_{l=1}^\infty\left(1-e\left(-{1\over 2^l}\right)\right)=F(w)+H(w)-c_{-1}.
\end{equation}
Note that $H$ is holomorphic in the whole $\C$ and $c_m=H(m)$.

As Masser pointed out to me, the function $F(w)+H(w)$ appears already in S. Ramanujan's work (see \cite{H}, p. 39); it also appears (as a function on $D$) in  Exercise 3.11, p. 44 of Masser's book \cite{M} and in the previous article of lectures by  Masser in \cite{M2} (see Lecture 3).  It is observed that it satisfies a functional equation when (in the present notation) $w$ is changed into $2w$. Indeed, we easily find that
\begin{equation}\label{E.S1fe}
S_1(2w)=S_1(w)-1,
\end{equation}
 This already shows that  $V+\Z=V$. Note  also that this yields (Masser's observation)  that the function $S_1(w)+(\log w/\log 2)$ (which is well-defined in $\H$ on agreeing for instance on the value $\log i=\pi/2$) is holomorphic on $\H$ and invariant under  $w\to 2w$, as pointed out in the above quoted sources.
 
 Let us now seek  other functional equations.
 
 Setting $w+m2^s$ in place of $w$, for integers $m\in\Z$, $s\ge 0$, we obtain
 \begin{equation}\label{E.S1fe2}
 \begin{split}
S_1(w+m2^s) &=S_1(w)+\sum_{l=s+1}^\infty e\left({w\over 2^l}\right)\left(e\left({m\over 2^{l-s}}\right)-1\right)\\
& = S_1(w)+ \sum_{l=1}^\infty \left(e\left({w\over 2^{l+s}}\right) -1\right)  \left(e\left({m\over 2^l}\right)-1\right) +c_m\\
& = S_1(w)+\Delta_{m,s}(w)+c_m,
\end{split}
\end{equation}
say.  Note that for $w$ in a compact subset $K$ of $\H$, we have 
\begin{equation}\label{E.D}
\Delta_{m,s}(w):=\sum_{l=1}^\infty \left(e\left({w\over 2^{l+s}}\right) -1\right)  \left(e\left({m\over 2^l}\right)-1\right) =O_K\left({1\over 2^s}\right).
\end{equation}

  \begin{proof}[Proof of Proposition \ref{P.P2}]   We have already noted above that, in view of \eqref{E.S1fe}, the image of $S$ is invariant under translation by integers, which yields the first claim of the proposition.
  
  Let now $K$ be any compact disk inside $\H$ and consider  formula  \eqref{E.S1fe2}.    Note that by \eqref{E.D}, for $s\to\infty$ (and fixed $m$ for instance)  the functions $\Delta_{m,s}$ converge uniformly and boundedly to $0$ for $w\in K$. Then, exactly by the same argument as in the proof of Proposition \ref{P.P},  if $v=S_1(w_0)$ is a value of $S_1$ at a point $w_0$ in the interior of $K$, then for large $s$  (and  fixed $m$) the function $S_1(w+m2^s)-c_m$ assumes as well that value.   Hence $S_1$ (and therefore $S$) assumes the value $v+c_m$, proving the contention. 
  \end{proof} 
  
  \medskip 
  
  Before going ahead, we pause to give a proof of the special case of Theorem \ref{T.T} in which $\C$ is replaced  by $\R$ as a set of possible values. This proof is a bit simpler than the general case and does not require the considerations of \S \ref{SS.cm}.

  \begin{proof}[Proof of Theorem \ref{T.T} for real values]   By Proposition \ref{P.P} it suffices to prove that $\R\subset  V$ (i.e. that $S_1$  attains all real numbers as values  on $\H$). Also, by Proposition \ref{P.P2}  applied with $m=-1$, it suffices to show that $\R-c_{-1}\in V$;  namely, using \eqref{E.S1H}, it suffices to show that  $F(w)+H(w)$ assumes any real value in $\H$.   Let us look at the values for purely imaginary $w=it$, $t\in\R$.  These values are real and by the formula $S_1(2w)=S_1(w)-1$ this set of real values is invariant by any integer translation. Thus it suffices to prove that it contains an interval of length $1$; on the other hand this is clear  by the same formula  $S_1(2w)=S_1(w)-1$, since the imaginary line is connected. 
   \end{proof}

  \subsection{About the constants \texorpdfstring{$c_m$}{cm}} \label{SS.cm} 
  Note that $c_{2m}=c_m$ and that     $\overline{c_m}=c_{-m}$.   
  As remarked above, $c_m$ is just the value of $H(z)$ at $z=m$. These numbers seem interesting on their own and we wonder whether they have irrationality of transcendency properties. We cannot prove any neat assertion of this type, but some small information in this direction will appear below.

    Proposition \ref{P.P2}  says that $V$ is stable under translation by the semigroup generated by $\Z$ and the $c_m$. Let $\sigma_m:=c_m+c_{-m}$ noting it  is real, and  in particular we find that $V$ is stable under  translation by the real semigroup generated by $\Z$ and the $\sigma_m$.  We shall study a bit  this semigroup, denoted here $\Gamma$.
    
    \medskip
    
    Now, it turns out that a related sequence has better approximation properties, useful for our purposes. 
    
    Define $k_l:=e(1/2^l)-1$. We have  $k_0=0$, $k_1=-2$, $k_2=i-1$, $k_3=(\sqrt 2-1+i\sqrt 2)/2$ and $k_l=k_{l+1}(k_{l+1}+2)$. Setting $\mu_l:=|k_l|^2$ we also have $\mu_l=2-2\cos(\pi/2^{l-1})$  and $\mu_{l+1}=2-\sqrt{4-\mu_l}$.

    Further,  $e(m/2^l)=(k_l+1)^m$ and $c_m=\sum_{l=1}^\infty ((k_l+1)^m-1)$. For $m\in\Z$ we now set
    \begin{equation}\label{E.c(m)}
    c(m)=\sum_{l=1}^\infty k_l^m.
    \end{equation}
    
    Note that the $k_l$ tend to $0$ exponentially so this formula yields  good approximations of the $c(m)$   (by  linear recurrence sequences) on taking  truncations of the series.
    
    These numbers are linear combinations of the $c_m$ with integer coefficients and conversely. In fact 
    $c(m)=\sum_{l=1}^\infty (e(m/2^l)-{m\choose 1}e((m-1)/2^l)+...)=c_m-{m\choose 1}c_{m-1}+\ldots +(-1)^{m-1}c_1$. Similarly, $c_m=\sum_{l=1}^\infty (k_l^m+{m\choose 1}k_l^{m-1}+\ldots +{m\choose 1}k_l)=c(m)+{m\choose 1}c(m-1)+\ldots +{m\choose 1}c(1)$.

    \medskip
    
    For the sake of curiosity, we also note the recurrence $c(m)=\sum_{l=0}^\infty k_{l+1}^m(k_{l+1}+2)^m=c(2m)+2{m\choose 1}c(2m-1)+4{m\choose 2}c(2m-2)+\ldots + 2^{m-1}{m\choose 1}c(m+1)+2^mc(m)$, whence  
     \begin{equation*}
    (1-2^m)c(m)=\sum_{h=1}^m2^{m-h}{m\choose h}c(m+h),
    \end{equation*}
 so the  $c(m)$ satisfy several linear relations with rational coefficients. These correspond in fact to the more evident relations $c_{2m}=c_m$. We wonder whether there are other such linear relations holding among them.
    
    \medskip
    
    Now, for our purposes it is more useful to consider the numbers 
  \begin{equation*}
  \sigma(m):=\sum_{l=1}^\infty(k_l+\bar k_l)^m=\sum_{l=1}^\infty \left(e\left({1\over 2^l}\right)+e\left(-{1\over 2^l}\right)-2\right)^m.
  \end{equation*}

  Observe that, for $2h\neq m$, $k_l^h\bar k_l^{m-h}+k_l^{m-h}\bar k_l^h$ equals an integer linear combination of terms $e(s/2^l)+e(-s/2^l)-2=\sigma_s$: this is seen on expanding $k_l^h=(e(1/2^l)-1)^h$ with the binomial theorem (and similarly for the complex conjugate). The same holds for a term $k_l^h\bar k_l^h$ in case $2h=m$. On summing over $l$ we deduce that $\sigma(m)$ is an integer linear combination of $\sigma_1,\ldots ,\sigma_m$.    (One can also write down an explicit formula in terms of Chebyshev polynomials.) 
    
  
    Now, $(-1)^m\sigma(m)$ is of the shape $4^m+2^m+O((2-\sqrt 2)^m)$.  We conclude that  for growing $m$, either one among $\sigma_1,\ldots,\sigma_m$ is irrational, or the least common denominator of these numbers grows to infinity.  In fact, if all $\sigma_j$ would be rationals with a bounded denominator, the said relation would imply the same for $\sigma(m)$, whence, since $0<2-\sqrt 2<1$,  for large $m$ we would have  $\sigma(m)=(k_1+\bar k_1)^m+(k_2+\bar k_2)^m$, which is plainly false. 

We in turn deduce that the semigroup $\Gamma$ generated by $\Z$ and the $\sigma_m$ is dense in $\R$. In fact, if some $\sigma_m$ is irrational this follows from well-known (easy)  theorems, and otherwise it follows from the deduction just obtained.

\subsection{Concluding arguments}  We can now use the results just obtained  to complete the proof of Theorems \ref{T.V} and  \ref{T.T}.

\begin{proof}[Proof of Theorem \ref{T.V} and Theorem  \ref{T.T}]  In view of Proposition \ref{P.P}, it suffices to prove Theorem \ref{T.V}. Let  $z_0\in V$ and let $A$ be an open disk with centre $z_0$, entirely contained in $V$ (which exists since $S(w)$ is holomorphic). 
Recall that $V$ is stable under translation by the semigroup $\Gamma\subset \R$, so  in particular $A+\Gamma\subset V$.  
Since $\Gamma$ is dense in $\R$ we deduce that $V$ contains a whole horizontal strip around the line $\Im w=\Im z_0$. 
 In particular, $V$ contains the line $\Im w=\Im z_0$ itself.  Hence $V$ contains any line $\Im w=t_0 $ as soon as it contains a point in such a line. But since $V$ is invariant under translation by $c_{\pm 1}\neq 0$, thus by any multiple  $sc_{\pm 1}$ (any integer $s>0$, any sign),  the imaginary parts of elements of $V$ are unbounded both from above and from below. But the set of such imaginary parts is connected, hence is the whole $\R$. Combining this with the above deduction, we conclude that $V=\C$, as required.
\end{proof}


 \bigskip

 \begin{rem} \label{R.R}  Concerning the distribution of zeroes of $f(z)$, one could  also think  for instance of using criteria such as Theorem 15.23 of \cite{R}, which would predict a convergent sum $\sum(1-|\alpha_i|)$ for the zeros $\alpha_i$, {\it provided}  $\int_{0}^1\log^+|f(re(\theta))|\d\theta$ remained  bounded as $r\to 1^-$.  
 
 However one may show that  this last condition does not hold  for the present function, that is, $f$ does not belong to the class denoted $N$ (Nevanlinna) in \cite{R}. We give a very brief sketch for this claim.  One first may expand  $f(re(\theta))$ for $r=e^{-c/2^n}$  ($1/2<c\le 1$), using the first $n$ terms and bounding the remainder by a constant (see e.g. \eqref{E.app} above).  Approximating  in turn $r^m=1+O(m/2^n)$,  
 $m=1,2,\ldots ,2^{n-1}$,  for each of these $n$ terms, one is reduced to estimate the exponential sums $s(\theta)=s_n(\theta)=e(\theta)+e(2\theta)+\ldots +e(2^{n-1}\theta)$ in absolute value.  (See also \cite{Mah3} for such approximate formulae.) 
 
 In conclusion,  if the said integral would be bounded then it is easy to see that,  in particular,  the measure of the set  
  $\Aa=\Aa_n=\{\theta\in [0,1]: |s(\theta)|>\sqrt n/2\}$ would tend to $0$, hence would be
  $<\epsilon$, for any  prescribed $\epsilon >0$  and large enough $n$. 
  Now, however, we easily find  the moments $\int_{0}^1|s(\theta)|^2\d\theta=n$, $\int_0^1|s(\theta)|^4\d\theta=2n^2-n$.  But on the one hand $\int_{\Aa}|s|^2\ge n-(\sqrt n/2)^2=3n/4$, on the other hand $(\int_{\Aa}|s|^2)^2\le \epsilon \int_{\Aa}|s^4|\le 2\epsilon n^2$. This is false for  $\epsilon<9/32$, proving the claim. 
    
  We do not know if nevertheless  the series $\sum (1-|\alpha_i|)$ converges (though we are inclined to believe it does not).  
  \end{rem} 
 
 \begin{rem} \label{R.R2} Quite possibly there are other methods to prove  the results of the paper (and maybe more). For instance, one could apply Ahlfors' theory (see especially \cite{Hay}, Ch. 5). Also, on approximating $f(z)$ as indicated in the previous remark (or see \cite{Mah3}), one should obtain that for $r=e^{-c/4^n}$,  ($1/2<c\le 1$),  $f(re(\theta))$ is approximated by $s_{2n}(\theta)$ up to a bounded function. In turn, we have $s_{2n}(\theta)=s_n(\theta)+s_n(2^n\theta)$, and on changing $\theta$ to $\theta_a:=\theta+(a/2^n)$ for an arbitrary integer $a$, we get $s_{2n}(\theta_a)=s_n(\theta_a)+s_n(2^n\theta)$. On varying $a$, the first term essentially varies along the whole set of values of $s_n$, whereas the second term remains constant. So one  nearly reduces to study $s_n(u)+s_n(v)$ for {\it independent } variables $u,v$, and one can also iterate (and compare with a compatible probabilistic distribution).  This possibly leads to a better understanding of the image of $f(re(\theta))$ for fixed $r$ approaching $1$, and in turn to an understanding of the distribution of the values attained by the function.
 \end{rem}
 
\appendix
\setcounter{secnumdepth}{0}
\section{Appendix by Francesco Veneziano}

\subsection{Finding a zero of \texorpdfstring{$S(w)$}{S(w)}}
Here we will prove computationally that the holomorphic function $S(w)$ defined in~\eqref{E.S} and expressed by the series
\begin{equation*}\label{S.serie}
 S(w)=\sum_{n=0}^{\infty}\left( e(w 2^n)+e\left(\frac{w-1}{2^{n+1}}\right)-e\left(-\frac{1}{2^{n+1}}\right)\right)
\end{equation*}
has a zero in the upper-half plane. This fact, together with Proposition~\ref{P.P}, provides an alternative proof of the main assertion that the Fredholm series has infinitely many zeroes, independent from the study of the quantities $c_m$ carried out in Section~\ref{SS.cm}.



Thanks to uniform convergence on compacts, we can write
\begin{equation*}\label{S.derivate}
 S^{(k)}(w)=\sum_{n=0}^{\infty}\left((2^{n+1}\pi i)^k  e(w 2^n)+\left(\frac{\pi i}{2^{n}}\right)^k e\left(\frac{w-1}{2^{n+1}}\right)\right).
\end{equation*}
Writing $t=\Im w$ we have
\begin{equation}\label{S.derivate.abs}
 \abs{S^{(k)}(w)}\leq(2\pi)^k\sum_{n=0}^{\infty}2^{k n} e^{-2^{n+1}\pi t}+\pi^k\sum_{n=0}^{\infty}2^{-k n} e^{-\frac{\pi t}{2^n}}.
\end{equation}


Set now $a=\log_2\frac{k}{2\pi t}$, and let $n$ be an integer, $n\geq \ceiling{a}+1$.
By  standard calculus we find
 \begin{equation}\label{stima1}
  2^{k n}e^{-2^{n+1}\pi t}=e^{kn\log2-2^{n+1}\pi t}\leq e^{-\frac{k}{2}}e^{-\frac{(\log2)^2}{2}kn}.
 \end{equation}
Splitting the first sum of \eqref{S.derivate.abs} at the summand $\ceiling{a}$ we get
\begin{equation*}
 \abs{S^{(k)}(w)}\leq(2\pi)^k\sum_{n=0}^{\ceiling{a}}2^{k n} e^{-2^{n+1}\pi t}+(2\pi)^k\sum_{n=\ceiling{a}+1}^{\infty}2^{k n} e^{-2^{n+1}\pi t}+\pi^k\sum_{n=0}^{\infty}2^{-k n} e^{-\frac{\pi t}{2^n}}.
 \end{equation*}
 Estimating the first and third terms with geometrical sums, putting these terms together, and  using \eqref{stima1} to estimate the middle term  we get, for $k\ge 2$, 
  \begin{equation*}
 \abs{S^{(k)}(w)}  \leq \frac{(2\pi)^k}{2^k-1}2^{k(\ceiling{a}+1)}+3^{k+1}\leq \frac{4}{3}(\pi 2^{a+2})^k+3^{k+1}.
\end{equation*}
From the definition of $a$ and the elementary inequality $k!\leq \frac{k^k}{e^{k-1}}$ 
we eventually  obtain
\begin{equation}\label{stima2}
 \abs{S^{(k)}(w)}\leq\left(\frac{2e}{t}\right)^k k! +3^{k+1}.
\end{equation}
Consider now a point $w_0$ on the upper-half plane, with imaginary part $t_0$. For a point $w$ at distance $\rho$ from $w_0$ using \eqref{stima2} we have that
\begin{align*}
 \abs{S(w)}&=\abs{\sum_{k=0}^\infty \frac{S^{(k)}(w_0)}{k!}(w-w_0)^k}\geq \abs{S'(w_0)}\rho-\abs{S(w_0)}-\sum_{k=2}^{\infty}\frac{\abs{S^{(k)}(w_0)}}{k!}\rho^k\\
 &\geq \abs{S'(w_0)}\rho-\abs{S(w_0)}-\sum_{k=2}^{\infty}\left(\frac{2e\rho}{t_0}\right)^k-3\sum_{k=2}^{\infty}\frac{(3\rho)^k}{k!}\\
 &=\abs{S'(w_0)}\rho-\abs{S(w_0)}-\frac{4e^2\rho^2}{t_0(t_0-2e\rho)}-3e^{3\rho}+9\rho+3.
\end{align*}
Now we can finally set $w_0=-0.177323882+0.144626388i$ and $\rho=0.002$ and check numerically that $\abs{S(w)}>\abs{S(w_0)}$ on the circle of centre $w_0$ and radius $\rho$, thus proving that within that disk lies a zero of $S(w)$.

\subsection{Some zeroes of the Fredholm series}
As shown, the Fredholm series has infinitely many zeroes. We list here the approximate values of a few of them, other than the zero at $z=0$ and the only other real zero at $z=-0.65862675\dotso$
\begin{footnotesize}
\begin{align*}
 -0.93753706&\pm0.33482855 i, & -0.92536435&\pm0.35488765 i, & -0.82600557&\pm0.55115255 i,\\
-0.68520628&\pm0.67053411 i, & -0.55421602&\pm0.82701155 i, & -0.36921064&\pm0.92165325 i,\\
0.12031484&\pm0.93460594 i, & 0.14635991&\pm0.98490932 i, & 0.18459117&\pm0.95835110 i,\\
0.39186275&\pm0.89825762 i, & 0.74745993&\pm0.65768729 i, & 0.77543377&\pm0.61813413 i.
\end{align*}
\end{footnotesize}
Of course the zeroes tend to accumulate towards the unit circle. Figure~\ref{zeri} shows the zeroes of the 13th partial sum which lie in the interior of the unit circle (there are 1126 of them); in order to visualize them better, we have rescaled their distances from the origin according to the hyperbolic metric in the Poincar{\'e} disk model.
In other words, each zero $\rho e^{i\theta}$ in the unit circle has been mapped in the picture to the point $\log\left(\frac{1+\rho}{1-\rho}\right) e^{i\theta}$.

\begin{figure}[h!]
 \includegraphics[scale=0.9]{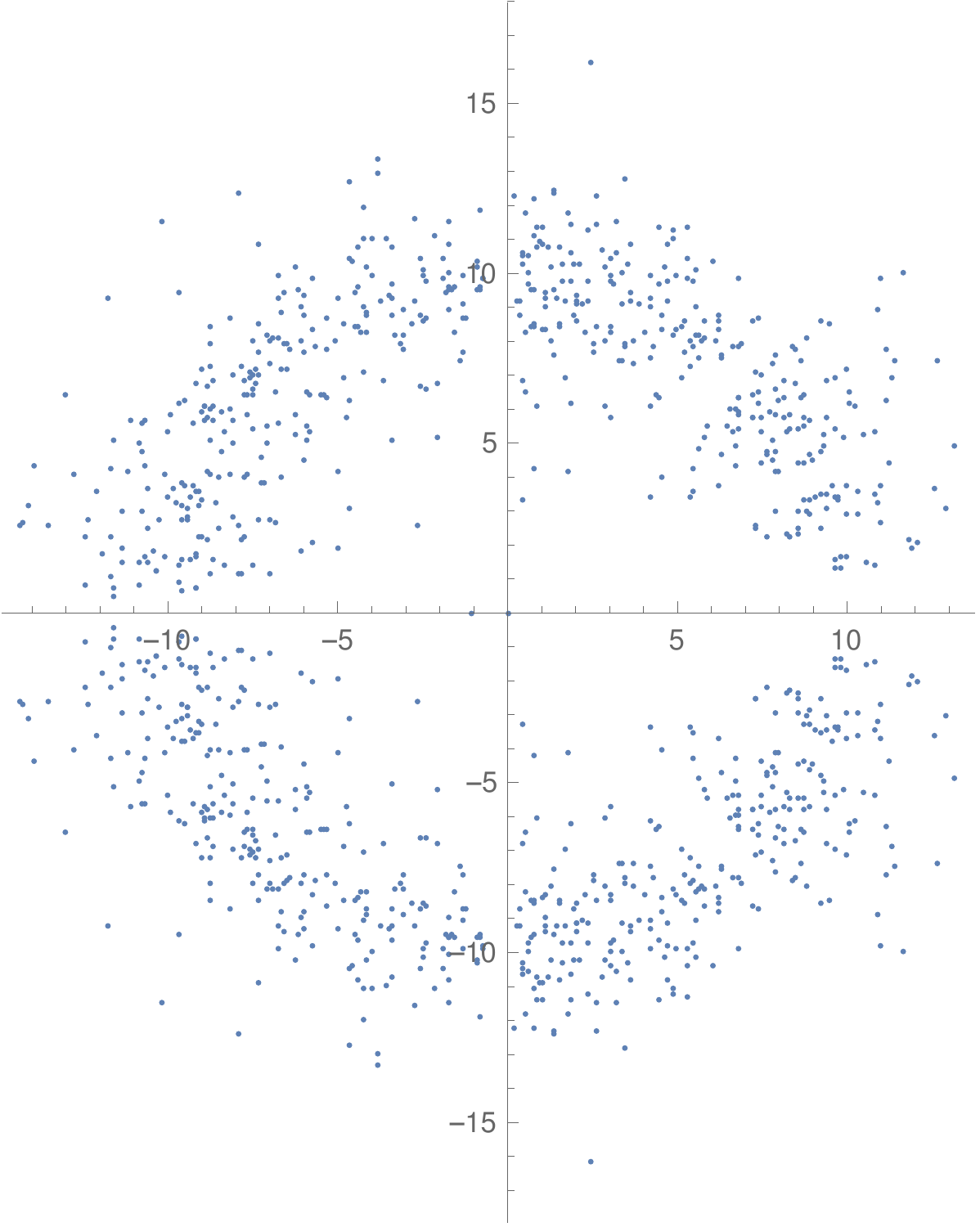}
 \caption{}\label{zeri}
\end{figure}


\vfill

Francesco Veneziano

Universit{\`a} di Genova

Via Dodecaneso 35, 16146 Genova, ITALY 

e-mail: veneziano@dima.unige.it

\FloatBarrier

%
%
%
%

\end{document}